\def\N{\mathbb N}
\def\A{\mathcal A}
\def\B{\mathcal B}
\def\S{\mathcal S}
\def\uu{\mathbf u}
\newcommand{\GP}[1]{\mathrm{P}_{#1}} % graph of infinite periodic factors
\newcommand{\UR}[1]{\mathrm{UR}_{#1}} % graph of unbounded letters
\newcommand{\UL}[1]{\mathrm{UL}_{#1}} % graph of unbounded letters
\title{An Algorithm Enumerating All Infinite Repetitions in a D0L-system}
\author{Karel Klouda\inst{1} \and Štěpán Starosta\inst{1}}
\institute{Faculty of Information Technology, Czech Technical University in Prague, Thákurova~9, 160~00, Prague, Czech Republic, \email{karel.klouda@fit.cvut.cz}}
\begin{document}

\maketitle

\begin{abstract}
	We describe a simple algorithm which, for a given D0L-system, returns all words $v$ such that $v^k$ is a factor of the language of the system for all $k$. This algorithm can be used to decide whether a D0L-system is repetitive.
\end{abstract}

\begin{keywords}
D0L-system; periodicity; repetition
\end{keywords}

\section{Introduction}

Repetitions in words were studied already by Thue~\cite{Th06} in 1906. Since then this topic has been addressed by many authors from many points of view. This work deals with repetitions in languages of  D0L-systems. In particular, we focus on the case when the language of a D0L-system contains an arbitrarily long repetition as a factor. The other case, when these repetitions are bounded, has been recently profoundly studied by Krieger~\cite{Kr07,Kr09}.

Ehrenfeucht and Rozenberg proved in~\cite{EhRo83} that an infinite repetition can appear in the language of a D0L-system as a factor only if there is a word $v$ such that $v^k$ is a factor of an element of the language for all $k \in \N$; formally, they proved that if a D0L-system is repetitive (i.e., there is an element of the language that contains a $k$-power as a factor for all positive $k$), then it is strongly repetitive (i..e, there exists a non-empty word $v$ such that $v^k$ appears in the language of the system as a factor for all positive $k$). Moreover, it follows from the work of Mignosi and Séébold in~\cite{MiSe} that this is the only source of infinite repetitions since for any D0L-system there is a constant $M$ such that $v^M$ being in the language as a factor implies that $v^k$ is in the language as a factor for all $k \in \N$.
Hence, in order to describe all infinite repetitions occurring in the language of a D0L-system as factors we can limit ourselves to all finite words $v$ having arbitrarily large repetitions in some element of the language.

The first problem we encounter is deciding whether the language of a given D0L-system contains factors with arbitrarily large repetitions or not. Decidability of this problem was proved for the first time in~\cite{EhRo83} and then, using different strategy, also in~\cite{MiSe}. But the algorithms are quite complicated and their complexity is unknown. Another algorithm working in polynomial time is given in~\cite{KoOt00} by Kobayashi and Otto.
Their approach uses the notion of \nohyphens{quasi-repetitive} elements: $v$ is a \emph{\nohyphens{quasi-repetitive} element} for a morphism $\varphi$ if there exist positive integers $n$ and $p$ with $p \neq 0$ such that $\varphi^n(v) = v_1^p$ where $v_1$ is a conjugate of $v$.
They proved that repetitiveness is equivalent to the existence of \nohyphens{quasi-repetitive} factors.

Here we slightly broaden this idea as we prove that a non-pushy D0L-system (pushy systems are known to be repetitive) is repetitive if and only if there is a letter $a$ and an integer $\ell$ such that the fixed point of $\varphi^\ell$ starting in $a$ is purely periodic.
For given $\ell$ and $a$, this is very easy to decide employing the algorithm introduced by Lando in~\cite{La91}.
Further, using the notion of simplification~\cite{EhRo78} and some technical results from~\cite{EhRo83} to handle pushy systems, we assemble an algorithm enumerating all arbitrarily long repetitions that appear in some element of the studied language.

The decidability of a more general problem whether $\varphi^\ell$ has an eventually periodic fixed point has been shown in \cite{Pa86} and \cite{HaLi86}.
Recently, Honkala in~\cite{Ho08} gave a simple algorithm to decide the problem.
However, the algorithm needs to compute a power of the morphism $\varphi$ that is greater than $k!$ where $k$ is the cardinality of the alphabet.

\section{Definitions and basic notions}

An \emph{alphabet} $\A$ is a finite set of \emph{letters}.
We denote by $\A^*$ the \emph{free monoid} on $\A$ and by $\A^+$ the set of all non-empty words.
The \emph{empty word} is denoted $\varepsilon$.
A subset of $\A^*$ is called a \emph{language} and its elements are \emph{words}.
Let $v = v_0 \cdots v_{n-1}$ with $v_i \in \A$ for $0 \leq i < n$.
The \emph{length} of $v$ is $n$ and is denoted by $|v|$.
We denote by $\mathrm{first}(v)$ the first letter of the word $v \in \A^+$, i.e., here $\mathrm{first}(v) = v_0$.
By repeating the word $v$ $k$-times with $k \in \N$ we get the \emph{$k$-power} of $v$ denoted by $v^k = vv\cdots v$.
Any infinite sequence of letters $\uu = u_0u_1\ldots$ is called an \emph{infinite word} over $\A$.
A word $v \in \A^*$ is a \emph{factor} of a finite or infinite word $u$ if there exist words $x$ and $y$ such that $u = xvy$; if $x$ is empty (resp. $y$ is empty), $v$ is a \emph{prefix} (resp. \emph{suffix}) of $u$. A word $w$ is a \emph{conjugate} of $v \in \A^*$ if $w = yx$ and $v = xy$ for some $x,y \in \A^*$; the set of all conjugates of $v$ is denoted by $[v]$. A word $v$ is \emph{primitive} if $v = z^k$ implies $k = 1$. The shortest word $x$ such that $v = x^k$, $k \in \N^+$, is the \emph{primitive root} of $v$. An infinite word $\uu$ is \emph{eventually periodic} if it is of the form $\uu = xyyyy\cdots = xy^\omega$; it is \emph{(purely) periodic} if $x$ is empty and \emph{aperiodic} if it is not eventually periodic.

Given two alphabets $\A$ and $\B$, any homomorphism $\varphi$ from $\A^*$ to $\B^*$ is called a \emph{morphism}. An infinite word $\uu$ is a \emph{periodic point} of a morphism $\varphi:\A^* \to \A^*$ if $\varphi^\ell(\uu) = \uu$ for some $\ell \geq 1$; periodic point starting with the letter $a$ such that $\varphi^\ell(a) = av$, for some $v \in \A^+$, is the infinite word $(\varphi^\ell)^\omega(a) = \lim_{k \to +\infty} \varphi^{k\ell}(a)$. A non-empty word $w$ is \emph{mortal} with respect to a morphism $\varphi$ if $\varphi^k(w) = \varepsilon$ for some $k$, otherwise it is \emph{immortal}.  A morphism $\varphi$ over $\A$ is \emph{non-erasing} if $\varphi(a)$ is non-empty for all $a \in \A$.

The triplet $G = (\A, \varphi, w)$, where $\A$ is an alphabet, $\varphi$ a morphism on $\A^*$ and $w \in \A^+$, is a \emph{D0L-system}.
The \emph{language} of $G$ is the set $L(G) = \{ \varphi^k(w) \mid k \in \N \}$.
The system $G$ is \emph{reduced} if every letter of $\A$ occurs in some element of $L(G)$.
In what follows, we will naturally suppose that we have a reduced system since if a system is not reduced, then we may consider a subset of the alphabet and a restriction of the morphism to get a reduced system with the same properties.
If the set $L(G)$ is finite, then $G$ is \emph{finite}. A letter $a \in \A$ is \emph{bounded} if the D0L-system $(\A, \varphi, a)$ is finite, otherwise $a$ is \emph{unbounded}.
The set of all bounded letters is denoted by $\A_0$.
The D0L-system $G$ is \emph{non-erasing} (resp. \emph{injective}) if the morphism $\varphi$ is non-erasing (resp. injective).
We denote by $S(L(G))$ the set of all factors of the elements of the set $L(G)$.

If for any $k \in \N^+$ there is a word $v$ such that $v^k \in S(L(G))$, then $G$ is \emph{repetitive}; if there is a word $v$ such that $v^k \in S(L(G))$ for all $k \in \N^+$, then $G$ is \emph{strongly repetitive}. By~\cite{EhRo83}, all repetitive D0L-systems are strongly repetitive. An important class of repetitive D0L-systems are pushy D0L-systems: $G$ is  \emph{pushy} if $S(L(G))$ contains infinitely many words over $\A_0$.

\section{Infinite periodic factors}

As explained above, to describe all infinite repetitions appearing as factors in the language of a D0L-system $G$ it suffices to study words $v$ such that $v^k \in S(L(G))$ for all $k \in \N$. Therefore we introduce the following notions.
\begin{definition}
	Given a D0L-system $G$, we say that $v^\omega$ is an \emph{infinite periodic factor} of $G$ if $v$ is a non-empty word and $v^k \in S(L(G))$ for all positive integers~$k$.

Let $v$ be non-empty. We say that infinite periodic factors $v^\omega$ and $u^\omega$ are \emph{equivalent} if the primitive root of $u$ is a conjugate of the primitive root of $v$.
We denote the equivalence class containing $v^\omega$ by~$[v]^\omega$.
\end{definition}

\subsection{Simplification}

In what follows, some of the proofs are based on the assumption that the respective D0L-system is injective. This does not mean any loss of generality of our results since any non-injective D0L-system can be mapped to an injective one that has the same structure of infinite periodic factors. This mapping is given by a simplification of morphisms~\cite{EhRo78}.
\begin{definition}
	Let $\A$ and $\B$ be two finite alphabets and let $f: \A^* \to \A^*$ and $g: \B^* \to \B^*$ be morphisms. We say that $f$ and $g$ are \emph{twined} if there exist morphisms $h: \A^* \to \B^*$ and $k: \B^* \to \A^*$ satisfying $k \circ h = f$ and $h \circ k = g$. If $\# \B < \# \A$ and $f$ and $g$ are twined, then $g$ is a \emph{simplification} (with respect to $(h,k)$) of $f$.
\end{definition}
If a D0L-system has no simplification, then it is called \emph{elementary}. It is known~\cite{EhRo78} that elementary D0L-systems are injective and thus non-erasing.

Moreover, it is easy to see that every non-injective morphism has a simplification that is injective.
A simple algorithm to find an injective simplification of a morphism works as follows.
If the morphism is erasing, one can find a non-erasing simplification using Proposition 3.6 in \cite{KoOt00}.
A simplification of a non-erasing morphism is related to the defect theorem and one can use the algorithm described in~\cite{HaKa}.

\begin{example}
	The morphism $f$ determined by $a \mapsto aca, b \mapsto badc, c \mapsto acab$ and $d \mapsto adc$ is not injective as $f(ab) = f(cd)$. Therefore, there must exist a simplification; let $h$ be a morphism given by $a \mapsto x, b \mapsto yz, c \mapsto xy, d \mapsto z$ and $k$ a morphism given by $x \mapsto aca, y \mapsto b, z \mapsto adc$. Since $f = k \circ h$, $f$ is twined with the morphism $g = h \circ k$, which is determined by $x \mapsto xxyx, y \mapsto yz, z \mapsto xzxy$ and defined over the alphabet $\{x,y,z\}$. One can easily check that $g$ has no simplification and hence it is elementary, and thus injective.
\end{example}

Let $G = (\A, f, w)$ and let $g: \B^* \to \B^*$ be an injective simplification of $f$ with respect to $(h,k)$.
We say that the D0L-system $(\B, g, h(w))$ is an \emph{injective simplification of $G$ (with respect to $(h,k)$}.
The role of injective simplifications in the study of repetitions is given by the following lemma.

\begin{lemma}[Kobayashi, Otto \cite{KoOt00}]\label{lem:simplifications_preserves_periodiv_factors}
	Let $G$ be a D0L-system and $G'$ its injective simplification with respect to $(h,k)$. Then $v^\omega$ is an infinite periodic factor of $G$ if and only if $(h(v))^\omega$ is an infinite periodic factor of $G'$.
\end{lemma}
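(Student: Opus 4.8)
The plan is to use only the defining identities of a twining, namely $k\circ h=f$ and $h\circ k=g$, to move factors back and forth between the two systems, and to invoke injectivity of $g$ only for the harder implication. First I would record the consequences of twining at the level of iterates. A straightforward induction gives $h\circ f^{n}=g^{n}\circ h$ and $k\circ g^{n}\circ h=f^{n+1}$ for every $n\ge 0$. Evaluating the first identity at $w$ yields $g^{n}(h(w))=h(f^{n}(w))$, so $L(G')=h(L(G))$; evaluating the second gives $k(g^{n}(h(w)))=f^{n+1}(w)$, so $k(L(G'))\subseteq L(G)$. Since $g$ is injective it is non-erasing, whence $k$ is non-erasing, and we may take $h$ non-erasing as well (for a non-erasing $f$, $h(a)=\varepsilon$ would force $f(a)=k(h(a))=\varepsilon$); thus both send non-empty words to non-empty words. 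Being morphisms, they transport the factor relation, so $h(S(L(G)))\subseteq S(L(G'))$ and $k(S(L(G')))\subseteq S(L(G))$.

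The forward implication is then immediate. If $v^{\omega}$ is an infinite periodic factor of $G$, then $v\neq\varepsilon$ and $v^{k}\in S(L(G))$ for every $k$; applying $h$ gives $h(v)^{k}=h(v^{k})\in S(L(G'))$ for every $k$, and $h(v)\neq\varepsilon$, so $h(v)^{\omega}$ is an infinite periodic factor of $G'$.

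For the converse I would first apply $k$ in the same way: from $h(v)^{k}\in S(L(G'))$ for all $k$ one obtains $k(h(v))^{k}=f(v)^{k}\in S(L(G))$ for all $k$, with $f(v)=k(h(v))\neq\varepsilon$, so $f(v)^{\omega}$ is already an infinite periodic factor of $G$. It remains to descend from $f(v)^{\omega}$ to $v^{\omega}$, and this is where the real work lies. The idea is a synchronisation (desubstitution) argument: fix a large $k$ and an occurrence of $h(v)^{k}$ inside some $h(u)$ with $u=f^{n}(w)$, and examine the block boundaries of the factorisation $h(u)=h(u_{0})\cdots h(u_{L-1})$ that fall within this occurrence. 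Since the blocks have bounded length while the occurrence has length $k\,|h(v)|$, pigeonhole on the boundary positions modulo $|h(v)|$ produces two boundaries in phase, hence a factor $z$ of $u\in L(G)$ with $h(z)$ equal to a high power of a conjugate of $h(v)$. Using that $g=h\circ k$ is injective (equivalently, that $G'$ is elementary) I would argue that such a long $h$-preimage of a high power is itself highly periodic, its primitive root being forced to be a conjugate of $v$; this yields a high power of a conjugate of $v$, and hence $v^{m}\in S(L(G))$ with $m\to\infty$ as $k\to\infty$. Finally the Mignosi--Séébold constant $M$ for $G$ turns $v^{M}\in S(L(G))$ into $v^{k}\in S(L(G))$ for all $k$, so $v^{\omega}$ is an infinite periodic factor of $G$.

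The main obstacle is exactly this last descent. Transporting factors through $h$ and through $k$ is routine, and it alone gives the forward implication together with the weaker statement that $f(v)^{\omega}$ is a periodic factor; but passing from a repetition created by $h$ back to a repetition already present in $L(G)$ requires injectivity of $g$ to exclude that $h$ manufactures the periodicity by gluing unrelated blocks. Making the synchronisation precise---controlling the phase of the block boundaries modulo $|h(v)|$ and proving that the primitive root of $z$ maps to a conjugate of the primitive root of $h(v)$ and therefore is itself a conjugate of $v$---is the technical core of the argument.
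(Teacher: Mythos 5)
Your transport identities and the forward implication are essentially sound: $h\circ f^{n}=g^{n}\circ h$ and $k\circ g^{n}\circ h=f^{n+1}$ do give $L(G')=h(L(G))$ and $k(L(G'))\subseteq L(G)$, hence $v^{\omega}$ an infinite periodic factor of $G$ implies $(h(v))^{\omega}$ an infinite periodic factor of $G'$. One caveat: your argument that $h(v)\neq\varepsilon$ assumes $f$ is non-erasing, whereas the lemma is stated for arbitrary $G$; for erasing $f$ you must argue separately that $h(v)=\varepsilon$ forces $f(v)=k(h(v))=\varepsilon$, i.e.\ $v$ mortal, and that a mortal word can never be an infinite periodic factor (a short desubstitution argument). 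Note also that the paper itself offers no proof of this lemma --- it is quoted from Kobayashi and Otto --- so your proposal has to be judged on its own merits rather than against an in-paper argument.

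The converse, which you correctly identify as the technical core, contains a genuine and in fact irreparable gap. The failing step is the claim that injectivity of $g=h\circ k$ forces an $h$-preimage $z$ of a high power of (a conjugate of) $h(v)$ to have primitive root conjugate to that of $v$. Injectivity of $g$ says nothing about injectivity of $h$, and $h$ is typically non-injective (that is exactly how the alphabet shrinks); a factor $z$ with $h(z)$ a power of $h(v)$ need not be related to $v$ at all. (Incidentally, injectivity of $g$ is also not equivalent to $G'$ being elementary; elementary implies injective, not conversely.) Worse, the implication you are trying to prove is false as literally stated: take $\A=\{a,b\}$, $f(a)=f(b)=aa$, $w=ab$, and the injective simplification given by $\B=\{x\}$, $h(a)=h(b)=x$, $k(x)=aa$, so that $g(x)=xx$ and $G'=(\{x\},g,xx)$. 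Then $(h(b))^{\omega}=x^{\omega}$ is an infinite periodic factor of $G'$, yet $b^{\omega}$ is not an infinite periodic factor of $G$, since $bb\notin S(L(G))$. What is true --- and what the paper actually relies on when its algorithm applies $k$ to the list computed for $G'$ and then takes primitive roots and conjugates --- is the $k$-direction statement: if $u^{\omega}$ is an infinite periodic factor of $G'$, then $(k(u))^{\omega}$ is an infinite periodic factor of $G$. That follows by the same one-line transport argument as your forward direction; no synchronization argument and no appeal to the Mignosi--Séébold bound is needed, and no amount of technical work can rescue the literal ``if and only if'' for every word $v$.
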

With this lemma in hand, we can consider only injective D0L-systems in the sequel.

\subsection{Graph of infinite periodic factors}

If $v^\omega$ is an infinite periodic factor of $G = (\A, \varphi, w)$, then $(\varphi(v))^\omega$ is an infinite periodic factor, too.
This gives us a structure that can be captured as a graph.
\begin{definition}
	Let $G = (\A, \varphi, w)$ be a D0L-system.
	The \emph{graph of infinite periodic factors} of $G$, denoted $\GP{G}$, is a directed graph with loops allowed and defined as follows:
	\begin{enumerate}
		\item the set of vertices of $\GP{G}$ is the set
			$$
				V(\GP{G}) = \{ [v]^\omega \mid v^\omega \mbox{ is an infinite periodic factor of } S(L(G)) \};
			$$
		\item there is a directed edge from $[v]^\omega$ to $[z]^\omega$ if and only if $\varphi(v^\omega) \in [z]^\omega$.
	\end{enumerate}
\end{definition}
Obviously, the outdegree of any vertex of $\GP{G}$ is equal to one.
\begin{lemma} \label{lem:periodic_has_periodic_ancestor}
	If $G = (\A, \varphi, w)$ is an injective D0L-system, then any vertex $[v]^\omega \in \GP{G}$ has indegree at least $1$.
\end{lemma}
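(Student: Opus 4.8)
The plan is to show directly that every infinite periodic factor $v^\omega$ admits an ancestor, i.e.\ a word $u$ such that $u^\omega$ is again an infinite periodic factor and $\varphi(u)$ is a power of a conjugate of $v$; this produces an edge from $[u]^\omega$ to $[v]^\omega$ and hence gives indegree at least $1$. First I would record two consequences of injectivity: $\varphi$ is non-erasing (an erasing injective morphism would send some non-empty letter to $\varepsilon=\varphi(\varepsilon)$), and the images $\{\varphi(a)\mid a\in\A\}$ form a code, so every word of $\varphi(\A^*)$ factorizes uniquely into letter-images. Set $C=\max_{a\in\A}|\varphi(a)|$. For each large $m$, since $v^m\in S(L(G))$ and $|v^m|$ eventually exceeds any bounded length, there is $n\ge 1$ with $v^m$ a factor of $\varphi^n(w)=\varphi(\varphi^{n-1}(w))$; desubstituting one step yields a factor $x$ of $\varphi^{n-1}(w)\in L(G)$ with $\varphi(x)=\alpha v^m\beta$ and $|\alpha|,|\beta|<C$.

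Next I would extract a short period from $x$. Write $x=x^{(0)}x^{(1)}\cdots$ and let $p_0<p_1<\cdots$ be the induced cut points of $\varphi(x)$, so the block from $p_j$ to $p_{j+1}$ is $\varphi(x^{(j)})$. Restricting to cut points lying well inside the periodic region $v^m$, the pigeonhole principle gives two of them with $p_i\equiv p_{i'}\pmod{|v|}$ and $0<i'-i\le|v|$. Because $p_{i'}-p_i$ is a positive multiple of $|v|$, the factor of $v^\omega$ between these positions is $(v')^r$ for a conjugate $v'$ of $v$; thus $u:=x^{(i)}\cdots x^{(i'-1)}$ satisfies $\varphi(u)=(v')^r$. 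The crucial point is that this period propagates. Since $\varphi(x)$ has period $|v|$ on the region, it has period $p_{i'}-p_i=|\varphi(u)|$ there, so the block of length $|\varphi(u)|$ starting at the cut point $p_{i'}$ again equals $\varphi(u)$; by uniqueness of factorization into codewords, its letter-images must be exactly $\varphi(x^{(i)}),\dots,\varphi(x^{(i'-1)})$, forcing $x^{(i'+s)}=x^{(i+s)}$ for $0\le s<i'-i$. Iterating across the region shows $x$ is periodic with period $u$, so $u^{s}$ is a factor of $x\in S(L(G))$ with $s\to\infty$ as $m\to\infty$.

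Finally I would use finiteness to fix a single ancestor. Since $|u|\le|v|$, there are only finitely many candidate words $u$, so one fixed $u$ arises for infinitely many $m$; for those $m$ we get $u^{s}\in S(L(G))$ for arbitrarily large $s$, and as $S(L(G))$ is factor-closed this yields $u^k\in S(L(G))$ for all $k$. Hence $u^\omega$ is an infinite periodic factor, $[u]^\omega$ is a vertex of $\GP{G}$, and $\varphi(u^\omega)=(v')^\omega\in[v]^\omega$ is the required edge into $[v]^\omega$. I expect the main obstacle to be the periodicity-transfer step: a periodic image need not come from a periodic preimage (as happens for non-injective $\varphi$), so injectivity must enter precisely through the unique-factorization/code property, and one must verify that the recurring period $u$ has bounded length so that only finitely many candidates remain.
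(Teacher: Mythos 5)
Your overall architecture---desubstituting one level to get $x$ with $\varphi(x)=\alpha v^m\beta$, pigeonholing on the residues mod $|v|$ of the cut points inside the periodic region, and finishing with finiteness of the bounded-length candidates---is sound, and it is essentially a finitary version of what the paper does (the paper instead takes an infinite preimage $\uu$ with $\varphi(\uu)=sv^\omega$ and proves it eventually periodic). However, the step you yourself flag as crucial, the periodicity transfer, has a genuine gap. You claim that since the block of length $|\varphi(u)|$ starting at the cut point $p_{i'}$ equals $\varphi(u)$, ``by uniqueness of factorization into codewords'' its letter-images must be exactly $\varphi(x^{(i)}),\dots,\varphi(x^{(i'-1)})$. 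Unique factorization only compares two \emph{complete} factorizations of a word into codewords, and here there are not two: the block in question is a prefix of $\varphi(x^{(i')}x^{(i'+1)}\cdots)$, and nothing guarantees that its right endpoint $p_{i'}+|\varphi(u)|$ is again a cut point. Injectivity makes $\varphi(\A)$ a code but not a prefix code (e.g.\ $a\mapsto a$, $b\mapsto ab$ is injective), and for a non-prefix code a prefix that happens to lie in $\varphi(\A^*)$ need not align with the ambient factorization. So the inference ``prefix matches $\varphi(u)$, hence the letters match'' is unsupported, and with it the iteration showing $x$ is $u$-periodic, which is what feeds your final finiteness step.

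The missing idea is exactly the paper's commutation trick, and it repairs your argument with little change. Take \emph{all} cut points $p_{i_1}<p_{i_2}<\cdots<p_{i_T}$ in the region sharing one residue $r$ modulo $|v|$ (some residue is shared by at least roughly $m/C$ of them), and set $y_j=x^{(i_j)}\cdots x^{(i_{j+1}-1)}$. Then $\varphi(y_j)=(v_r)^{k_j}$ for the same conjugate $v_r$ of $v$, so $\varphi(y_jy_{j'})=(v_r)^{k_j+k_{j'}}=\varphi(y_{j'}y_j)$, and injectivity forces $y_jy_{j'}=y_{j'}y_j$. By the commutation property (Proposition 1.3.2 in \cite{Lo83}, the same fact the paper invokes), all the $y_j$ are powers of a single primitive word $z$ with $\varphi(z)$ a power of $v_r$; hence $y_1\cdots y_{T-1}=z^E$ with $E\ge T-1\to\infty$, and $|z|\le|u|\le|v|$, so your finiteness argument goes through with $z$ (the primitive root of your $u$) in place of $u$. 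The paper reaches the same point contrapositively: assuming the infinite preimage $\uu$ is aperiodic, it picks prefixes $u_1,u_2,u_3$ so that the intermediate words satisfy $xy\neq yx$ while $\varphi(xy)=\varphi(yx)$, contradicting injectivity. Either way, it is this commutation-plus-injectivity argument, not unique factorization of codewords, that makes the period propagate.
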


\begin{proof}
Let $[v]^\omega \in V(\GP{G})$.
Since $v^\omega$ is an infinite factor of $G$, there exists an infinite word $\uu$ such that $sv^\omega = \varphi(\uu)$ and the word $s$ is a non-empty suffix of $v$ such 	that $|s| < |v|$.

To obtain a contradiction suppose that $\uu$ is not eventually periodic, i.e., is aperiodic.
By the pigeonhole principle there exist words $v_1$ and $v_2$ such that $v = v_1v_2$ and there exist infinitely many prefixes $u$ of $\uu$ such that
$\varphi(u) = s v^kv_1$ for some integer $k$.

Let $(u_1, u_2, u_3)$ be a subsequence  of the sequence of all such prefixes ordered by increasing length.
Denote $u_1 x = u_2$ and $u_2 y = u_3$. The situation is demonstrated in Figure \ref{fig:dukaz}.
It is known (see, e.g., Proposition 1.3.2 in \cite{Lo83}) that for any $w_1,w_2 \in \A^*$ the equality $w_1w_2 = w_2w_1$ implies that there exists a word $z$ such that $w_1 = z^m$ and $w_2 = z^n$ for some integers $m$ and $n$. 
Therefore, since $\uu$ is aperiodic, we can choose the prefixes $u_1, u_2$ and $u_3$ such that $xy \neq yx$. We have $\varphi(x) = v_2v^kv_1$ and $\varphi(y) = v_2v^\ell v_1$ for some integers $k$ and $\ell$.
This implies $\varphi(xy) = \varphi(yx)$ which is a contradiction.

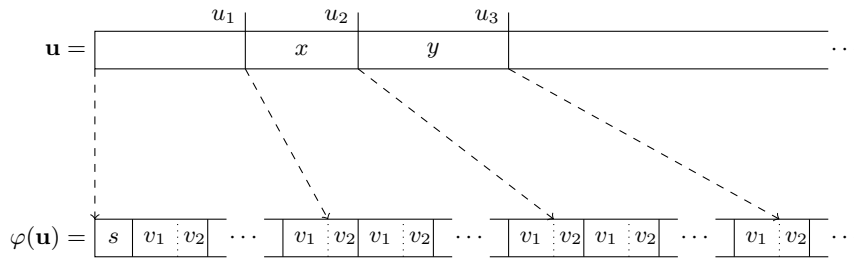
\begin{figure}
\centering
\begin{tikzpicture}
	% \draw[help lines] (0,0) grid (12,6);
	% \path [fill=lightgray] (3.5,3.75) rectangle (5,4.25);
	\node [left] at (1.5,4) {$\mathbf{u} = $};
	\node at (11.5,4) {$\cdots$};
	\draw (11.25,4.25) -- (1.5,4.25) -- (1.5, 3.75) -- (11.25,3.75);
	\draw (3.5,3.75) -- (3.5,4.5);
	\node [above left] at (3.5,4.25) {$u_1$};
	\draw (5,3.75) -- (5,4.5);
	\node [above left] at (5,4.25) {$u_2$};
	\draw (7,3.75) -- (7,4.5);
	\node [above left] at (7,4.25) {$u_3$};
	\node at (4.25,4) {$x$};
	\node at (6,4) {$y$};
	\node [left] at (1.5,1.5) {$\varphi(\mathbf{u}) = $};
	\draw (3.25,1.75) -- (1.5,1.75) -- (1.5, 1.25) -- (3.25,1.25);
	\draw (2,1.25) -- (2,1.75);
	\node at (1.75,1.5) {$s$};
	\node at (2.3,1.5) {$v_1$};
	\draw [dotted] (2.6,1.25) -- (2.6,1.75);
	\node at (2.83,1.5) {$v_2$};
	\draw (3,1.25) -- (3,1.75);
	\node at (3.5,1.5) {$\cdots$};

	\draw (6.25,1.75) -- (3.75,1.75);
	\draw (3.75, 1.25) -- (6.25,1.25);
	\draw (4,1.25) -- (4,1.75);
	\node at (4.3,1.5) {$v_1$};
	\draw [dotted] (4.6,1.25) -- (4.6,1.75);
	\node at (4.83,1.5) {$v_2$};
	\draw (5,1.25) -- (5,1.75);
	\node at (5.3,1.5) {$v_1$};
	\draw [dotted] (5.6,1.25) -- (5.6,1.75);
	\node at (5.83,1.5) {$v_2$};
	\draw (6,1.25) -- (6,1.75);
	\node at (6.5,1.5) {$\cdots$};

	\draw (9.25,1.75) -- (6.75,1.75);
	\draw (6.75, 1.25) -- (9.25,1.25);
	\draw (7,1.25) -- (7,1.75);
	\node at (7.3,1.5) {$v_1$};
	\draw [dotted] (7.6,1.25) -- (7.6,1.75);
	\node at (7.83,1.5) {$v_2$};
	\draw (8,1.25) -- (8,1.75);
	\node at (8.3,1.5) {$v_1$};
	\draw [dotted] (8.6,1.25) -- (8.6,1.75);
	\node at (8.83,1.5) {$v_2$};
	\draw (9,1.25) -- (9,1.75);
	\node at (9.5,1.5) {$\cdots$};

	\draw (11.25,1.75) -- (9.75,1.75);
	\draw (9.75, 1.25) -- (11.25,1.25);
	\draw (10,1.25) -- (10,1.75);
	\node at (10.3,1.5) {$v_1$};
	\draw [dotted] (10.6,1.25) -- (10.6,1.75);
	\node at (10.83,1.5) {$v_2$};
	\draw (11,1.25) -- (11,1.75);
	\node at (11.5,1.5) {$\cdots$};

	\draw[dashed,->] (1.5,3.75) -- (1.5,1.75);
	\draw[dashed,->] (3.5,3.75) -- (4.6,1.75);
	\draw[dashed,->] (5,3.75) -- (7.6,1.75);
	\draw[dashed,->] (7,3.75) -- (10.6,1.75);
\end{tikzpicture}
\caption{The choice of prefixes $u_1$, $u_2$ and $u_3$.}
 \label{fig:dukaz}
\end{figure}

Thus, $\uu$ is an infinite periodic factor of $G$.
It follows that there exists a primitive word $u$ such that $\uu = zu^\omega$, $z \in \A^*$, and there is a directed edge in $\GP{G}$ from $[u]^\omega$ to $[v]^\omega$. \qed

\end{proof}

\begin{corollary}
If $G = (\A, \varphi, w)$ is an injective D0L-system,
then its graph of infinite periodic factors $\GP{G}$ is $1$-regular. In other words, $P_G$ consists of disjoint cycles.
\end{corollary}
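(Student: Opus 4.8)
The plan is to establish $1$-regularity by pinning down the out-degree and the in-degree of every vertex separately, and then to invoke the elementary fact that a directed graph in which every vertex has in-degree and out-degree exactly one decomposes into disjoint directed cycles. The out-degree is already under control: as observed right after the definition of $\GP{G}$, the assignment $[v]^\omega \mapsto [\varphi(v)]^\omega$ is well defined on equivalence classes — passing from $v^\omega$ to an equivalent $u^\omega$ only prepends a finite factor, so $\varphi(u^\omega)$ and $\varphi(v^\omega)$ share a common tail and hence the same conjugacy class of primitive root — and therefore every vertex has out-degree exactly one. Thus $\GP{G}$ is a functional graph: following the unique outgoing edge defines a self-map $\Phi$ of the vertex set, and the in-degree of a vertex is precisely the cardinality of its $\Phi$-preimage.

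For the in-degree, Lemma~\ref{lem:periodic_has_periodic_ancestor}, which uses injectivity of $G$, tells us that every vertex has in-degree at least one; equivalently, $\Phi$ is surjective. It therefore remains to prove that $\Phi$ is injective, i.e. that every vertex has in-degree at most one. The route I would take is to deduce this from the finiteness of the vertex set $V(\GP{G})$: a surjective self-map of a finite set is automatically a bijection, so $\Phi$ would be a bijection and each in-degree would equal exactly one. Note that finiteness is also what ultimately rules out bi-infinite chains and lets one write ``disjoint cycles'' rather than ``cycles and two-sided infinite paths''.

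Here lies the main obstacle. One must verify that, up to the equivalence of infinite periodic factors, there are only finitely many vertices. I expect this to be the genuine content of the argument and the step that really uses the structure theory of D0L-systems — for instance the Mignosi–Séébold constant $M$ recalled in the introduction, which reduces infinite periodic factors to the bounded-power factors $v^M$, together with a bound on the admissible period lengths. It is worth stressing that injectivity of $\varphi$ \emph{alone} cannot replace this input: the injective morphism $a \mapsto ab,\ b \mapsto ba$ would send both $[a]^\omega$ and $[b]^\omega$ to $[ab]^\omega$, since $ba$ is a conjugate of $ab$, so the local edge relation by itself is not injective. What saves the corollary is that $a^\omega$ and $b^\omega$ are not \emph{genuine} infinite periodic factors of that system; injectivity of $\Phi$ thus cannot be read off from the edge relation in isolation and must come from the global, finitary structure of $V(\GP{G})$.

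Finally, once $\Phi$ is known to be a bijection, every vertex has in-degree and out-degree exactly one. A standard decomposition argument then shows that each weakly connected component is a single directed cycle: starting from any vertex and iterating $\Phi$ must eventually repeat, and the unique-predecessor property forces the first repeated vertex to close up a cycle with no attached in-trees. Hence $\GP{G}$ is $1$-regular and consists of disjoint cycles, as claimed.
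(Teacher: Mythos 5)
Your skeleton coincides with the paper's own (implicit) argument: the paper states this corollary with no proof at all, as an immediate juxtaposition of the remark that every out-degree equals one and of Lemma~\ref{lem:periodic_has_periodic_ancestor}, which gives in-degree at least one. Your additional observation --- that one still needs in-degree \emph{at most} one, that injectivity of $\varphi$ alone cannot supply it (your example $a \mapsto ab$, $b \mapsto ba$ is apt), and that the conclusion therefore hinges on finiteness of $V(\GP{G})$ --- is correct, and it is in fact more care than the paper itself displays; finiteness is needed twice, both to upgrade surjectivity of $\Phi$ to bijectivity and to exclude bi-infinite paths, exactly as you say.

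The problem is that you flag this finiteness as ``the main obstacle'' and then do not close it, and the route you sketch does not work as described. The Mignosi--S\'e\'ebold constant $M$ goes in the wrong direction for counting: it says that $v^M \in S(L(G))$ already forces $v^\omega$ to be an infinite periodic factor, which reformulates the vertex set but gives no bound on the number (or the lengths) of admissible primitive periods $v$; the ``bound on the admissible period lengths'' you allude to is precisely the missing content. Worse, within this paper such a bound is obtained only in the corollary following Theorem~\ref{thm:periodic_periodic_points} (finitely many primitive $v$ with $v^\omega$ an infinite periodic factor), and the proof of Theorem~\ref{thm:periodic_periodic_points} begins by assuming that $[v]^\omega$ lies on a cycle of $\GP{G}$ --- i.e., it invokes the very corollary you are proving. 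So filling your gap by appeal to that finiteness result would be circular; an independent finiteness argument (or a direct proof that $\Phi$ is injective on actual vertices) is required, and your proposal contains neither. To be fair, the paper silently elides the same point, but as a self-contained proof attempt yours remains incomplete at exactly the step you identified.
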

So, each vertex of $\GP{G}$ is a vertex of a cycle. In what follows, we will distinguish two types of cycles:
\begin{lemma}
	Let $G = (\A, \varphi, w)$ be an injective D0L-system. If $[v]^\omega$ and $[w]^\omega$ are two vertices of the same cycle in $\GP{G}$, then $v$ consists of bounded letters if and only if $w$ consists of bounded letters.
\end{lemma}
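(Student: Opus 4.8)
The plan is to reduce the whole statement to a single fact about how $\varphi$ interacts with the set $\A_0$ of bounded letters, and then to propagate that fact around the cycle one edge at a time.

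First I would establish the key equivalence that, for a single letter $a \in \A$, we have $a \in \A_0$ if and only if $\varphi(a) \in \A_0^*$. The forward implication is immediate: if $a$ is bounded and a letter $b$ occurs in $\varphi(a)$, then $\varphi^k(b)$ is a factor of $\varphi^{k+1}(a)$, so the lengths $|\varphi^k(b)|$ stay bounded and hence $b \in \A_0$. For the converse, writing $\varphi(a) = b_1 \cdots b_m$ with every $b_i \in \A_0$, the identity $|\varphi^k(a)| = \sum_{i} |\varphi^{k-1}(b_i)|$ stays bounded in $k$, so $a \in \A_0$. Applying this letterwise upgrades it to words: $v \in \A_0^*$ if and only if $\varphi(v) \in \A_0^*$, where for the backward direction any unbounded letter of $v$ contributes an unbounded letter to $\varphi(v)$ by the single-letter equivalence.

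Next I would check that the property ``consisting of bounded letters'' is invariant along one edge of $\GP{G}$. By definition an edge $[v]^\omega \to [z]^\omega$ means $\varphi(v^\omega) = (\varphi(v))^\omega \in [z]^\omega$, so the primitive root of $z$ is a conjugate of the primitive root of $\varphi(v)$. Since both conjugation and passing to a primitive root preserve the set of letters occurring in a word, $z$ and $\varphi(v)$ use exactly the same letters; in particular $z \in \A_0^*$ if and only if $\varphi(v) \in \A_0^*$. Combined with the word-level equivalence from the previous step, this gives $v \in \A_0^*$ if and only if $z \in \A_0^*$. The same observation shows the property is well defined on the class $[v]^\omega$, so the chosen representative is irrelevant.

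Finally, because $[v]^\omega$ and $[w]^\omega$ lie on a common cycle, there is a directed path from one to the other obtained by iterating $\varphi$ along the cycle, and the edge-invariance transports the property along every step; hence $v \in \A_0^*$ if and only if $w \in \A_0^*$, which is the claim. I expect the only delicate point to be the bookkeeping in the third step, namely making sure that replacing $\varphi(v)$ first by its primitive root and then by an arbitrary conjugate $z$ never alters which letters are present, since the boundedness equivalence itself is routine once stated.
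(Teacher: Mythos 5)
Your proof is correct and follows essentially the same route as the paper, which simply invokes the ``obvious fact'' that $\varphi$ maps words over $\A_0$ to words over $\A_0$ and words containing an unbounded letter to words containing an unbounded letter, then transports this around the cycle; your single-letter equivalence $a \in \A_0 \Leftrightarrow \varphi(a) \in \A_0^*$ is just a careful unpacking of that fact, and your attention to conjugates and primitive roots preserving the letter set fills in a detail the paper leaves implicit.
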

A proof follows from the obvious fact that if $v \in \A_0^+$, then $\varphi^k(v) \in \A_0^+$ for all $k \in \N^+$. Similarly, if $v$ contains an unbounded letter, then $\varphi^k(v)$ must contain an unbounded letter as well for all $k \in \N^+$. So, in $\GP{G}$ we have two types of cycles: over bounded letters only or containing some unbounded letters. We treat these two cases separately.

\subsection{Bounded letters}

Obviously, if $v^\omega$ is an infinite periodic factor of a D0L-system $G$ with $v$ containing bounded letters only, then $G$ is pushy by definition.
In \cite{EhRo83} it is proved that it is decidable whether a D0L-system is pushy or not. In particular, the authors proved that a D0L-system is pushy if and only if it satisfies the \emph{edge condition}: there exist $a \in \A$, $k \in \N^+$, $v \in \A^*$ and $u \in \A_0^+$ such that $\varphi^k(a) = vau$ or $\varphi^k(a) = uav$. Here we slightly broaden the idea and show how to describe all factors over $\A_0$.
\begin{definition}
Let $G = (\A, \varphi, w)$ be a D0L-system.
\emph{The graph of unbounded letters of $G$ to the right}, denoted $\UR{G}$, is the labeled directed graph defined as follows:
	\begin{enumerate}[(i)]
		\item the set of vertices is $V(\UR{G}) = \A \setminus \A_0$,
		\item there is a directed edge from $a$ to $b$ with label $u \in \A_0^*$ if there exists $v \in \A^*$ such that $\varphi(a) = vbu$.
	\end{enumerate}
	\emph{The graph of unbounded letters of $G$ to the left} $\UL{G}$ is defined analogously: the only difference is that the roles of $v$ and $u$ in the definition of a directed edge are switched.
\end{definition}
Clearly, the edge condition is satisfied if and only if one of the graphs $\UL{G}$ and $\UR{G}$ contains a cycle including an edge with an immortal label. Therefore, we have this:
\begin{proposition}
	A D0L-system $G$ is pushy if and only if one of the graphs $\UL{G}$ and $\UR{G}$ contains a cycle including an edge with an immortal label.
\end{proposition}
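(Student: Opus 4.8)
The plan is to take as given the characterization of~\cite{EhRo83} quoted above, namely that $G$ is pushy if and only if it satisfies the edge condition, and to verify that the edge condition is equivalent to one of $\UL{G}$, $\UR{G}$ containing a cycle through an edge with an immortal label. I would treat the right-hand version $\varphi^k(a)=vau$ together with $\UR{G}$; the case $\varphi^k(a)=uav$ and $\UL{G}$ is symmetric. The first observation is that each vertex of $\UR{G}$ has out-degree exactly one: if $a$ is unbounded then $\varphi(a)$ contains an unbounded letter, and any decomposition $\varphi(a)=vbu$ with $b\in\A\setminus\A_0$ and $u\in\A_0^*$ forces $b$ to be the rightmost unbounded letter of $\varphi(a)$ and $u$ its maximal bounded suffix. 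Hence the out-going walk $a=a_0\to a_1\to a_2\to\cdots$ with labels $u_1,u_2,\ldots$ is deterministic, and an easy induction gives $\varphi^k(a)=w_k\,a_k\,s_k$, where $a_k$ is the rightmost unbounded letter of $\varphi^k(a)$ and $s_k=u_k\varphi(u_{k-1})\cdots\varphi^{k-1}(u_1)\in\A_0^*$ is its maximal bounded suffix.

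For the implication from pushiness to a cycle, I would start from the edge condition supplied by~\cite{EhRo83}: $\varphi^k(a)=vau$ with $u\in\A_0^+$. I would first upgrade $u$ to an \emph{immortal} bounded block. This is the only genuinely delicate point: a mortal block satisfies $\varphi^p(u)=\varepsilon$ for some $p$, so it cannot be responsible for arbitrarily long factors over $\A_0$, and pushiness therefore forces the relevant block to be immortal; in particular $a$ is then unbounded, since otherwise $\varphi^{mk}(a)\in\A_0^*$ would grow without bound, contradicting boundedness of $a$. With $u$ immortal and $a$ unbounded, $a$ is the rightmost unbounded letter of $vau$ (everything to its right lies in $\A_0^*$), so comparing with the canonical decomposition gives $a_k=a$ and $s_k=u$. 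Determinism of the walk then shows that $a$ lies on a cycle of $\UR{G}$ traversed by the edges $a_0\to a_1,\ldots,a_{k-1}\to a_k$. Finally, since $s_k=u$ is immortal, and a concatenation is immortal precisely when one of its factors is (a word is mortal if and only if all its letters are), some $\varphi^{k-i}(u_i)$ is immortal, whence $u_i$ itself is immortal; the edge carrying $u_i$ lies on the cycle, which is what we wanted.

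For the converse I would argue directly, without re-invoking~\cite{EhRo83}. Given a cycle of length $n$ through $a$ in $\UR{G}$ carrying an immortal label $u_j$, the formula above yields a self-embedding $\varphi^n(a)=w_n\,a\,s_n$ in which $s_n$ contains the non-empty factor $\varphi^{n-j}(u_j)$ and is therefore an immortal word over $\A_0$. Iterating this embedding gives $\varphi^{mn}(a)=w'_m\,a\,\bigl(s_n\varphi^n(s_n)\cdots\varphi^{(m-1)n}(s_n)\bigr)$, whose bounded suffix has length at least $m$ because each $\varphi^{in}(s_n)$ is non-empty by immortality. As the system is reduced, $a$ occurs in some $\varphi^t(w)$, so each of these arbitrarily long words over $\A_0$ is a factor of an element of $L(G)$; thus $S(L(G))$ contains infinitely many words over $\A_0$ and $G$ is pushy. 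I expect the main obstacle to be the immortality bookkeeping in the first implication---precisely the gap between the literally stated edge condition (a non-empty bounded block) and the stronger property actually needed (an immortal block)---together with checking that the matching label on the cycle is the one that survives under $\varphi$.
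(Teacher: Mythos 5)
Your route is the same as the paper's: the paper's entire proof is the sentence preceding the proposition, which cites \cite{EhRo83} for ``pushy $\Leftrightarrow$ edge condition'' and declares the translation between the edge condition and cycles of $\UL{G}$ and $\UR{G}$ to be clear. Your bookkeeping for that translation is correct (out-degree one, the canonical decomposition $\varphi^k(a)=w_k\,a_k\,s_k$, immortality of a concatenation coming from immortality of some single label $u_i$), and your direct argument that a cycle with an immortal label forces pushiness is complete and in fact more self-contained than what the paper records.

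The genuine gap is the step you yourself single out: upgrading the edge condition's non-empty bounded block to an \emph{immortal} one. ``A mortal block cannot be responsible for arbitrarily long factors over $\A_0$'' is not an argument --- ``responsible'' has no precise meaning, and supplying one is essentially proving the proposition. Worse, no instance-by-instance repair can work, because the edge condition as literally stated carries no usable information: for $\varphi\colon a\mapsto aab$, $b\mapsto\varepsilon$ the edge condition holds with $a$ unbounded and $u=b\in\A_0^+$ mortal, yet the system is not pushy (the only non-empty factor of its language over $\A_0$ is $b$), and the unique cycle of $\UR{G}$ is a loop at $a$ whose label $b$ is mortal. So the forward implication cannot be routed through the stated edge condition at all; pushiness must enter through a global argument. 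The standard one is the contrapositive: if every label on every cycle of $\UL{G}$ and $\UR{G}$ is mortal, then in the bounded suffix $s_n=u_n\varphi(u_{n-1})\cdots\varphi^{n-1}(u_1)$ of $\varphi^n(c)$ (walk from any unbounded $c$) only the terms $\varphi^{n-i}(u_i)$ with $i\le\#\A$ (pre-cycle labels) or $n-i<M$ ($M$ a mortality bound) can be non-empty, and each has bounded length, so these suffixes, the analogous prefixes, and hence --- by the recursive structure of maximal $\A_0$-blocks, each being a suffix contribution, an image of a bounded block, and a prefix contribution --- all bounded factors of all $\varphi^n(w)$ have bounded length, i.e., $G$ is not pushy. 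That analysis (or, alternatively, quoting the precise form of the theorem in \cite{EhRo83}, whose edge condition must involve an immortal block for the equivalence to be true) is what is missing. In fairness, the paper's ``clearly'' glosses over exactly the same point: as the example above shows, its restated edge condition is \emph{not} equivalent to the cycle condition.
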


Since the outdegree of any vertex of the graphs $\UL{G}$ and $\UR{G}$ is $1$, the graphs consist of components each containing exactly one cycle.
Assume that $a$ is a vertex of a cycle of $\UR{G}$ that contains at least one edge with a non-empty immortal label, i.e., there exist $k \in \N^+$, $u_1, \ldots, u_k \in \A_0^*$ and $a_1, \ldots, a_{k-1} \in \A \setminus \A_0$ so that
$$
	a \xrightarrow{u_1} a_1 \xrightarrow{u_2} \cdots \xrightarrow{u_{k-1}} a_{k-1} \xrightarrow{u_{k}} a
$$
is a cycle of $\UR{G}$ with $u_j$ being immortal for some $j$ such that $1 \leq j \leq k$.
Denote $u = u_{k} \varphi(u_{k-1}) \cdots \varphi^{k-1}(u_1)$, then for all $\ell \in \N^+$ the word $\varphi^{\ell k}(a)$ has a suffix
$$
	u\varphi^{k}(u)\varphi^{2k}(u)\cdots\varphi^{(\ell - 1)k}(u) \in \A_0^+\,.
$$

\begin{lemma}
	Let $G = (\A, \varphi, w)$ be a D0L-system. If $u \in \A_0^+$ is immortal and $k \in \N^+$, then the infinite word
	$$
		\mathbf{u} = u\varphi^{k}(u)\varphi^{2k}(u)\varphi^{3k}(u) \ldots
	$$
	is eventually periodic.
\end{lemma}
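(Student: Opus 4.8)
The plan is to exploit the fact that $u$ is a word over bounded letters, so that all of its iterated images under $\varphi$ come from a finite pool of words; the eventual periodicity of $\mathbf{u}$ then falls out of the pigeonhole principle applied to this pool.

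First I would record the finiteness. Writing $u = u_0 u_1 \cdots u_{n-1}$ with every $u_i \in \A_0$, boundedness of each letter means that the set $\{\varphi^m(u_i) \mid m \in \N\}$ is finite. Since $\varphi^m(u) = \varphi^m(u_0)\varphi^m(u_1)\cdots\varphi^m(u_{n-1})$, every image $\varphi^m(u)$ is a concatenation of words drawn from these finitely many sets, so the whole collection $\{\varphi^m(u) \mid m \in \N\}$ is finite; in particular its subset $T = \{\varphi^{jk}(u) \mid j \in \N\}$ is finite.

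Next I would set $v_j = \varphi^{jk}(u)$ and observe that $v_{j+1} = \varphi^k(v_j)$, so that $(v_j)_{j \geq 0}$ is the orbit of $v_0 = u$ under the self-map $\varphi^k$ of the finite set $T$. An orbit of a map on a finite set is eventually periodic, hence there are $J \in \N$ and $P \in \Np$ with $v_{j+P} = v_j$ for every $j \geq J$. Because $u$ is immortal, no $v_j$ is empty, and therefore the word $\pi = v_J v_{J+1} \cdots v_{J+P-1}$ is non-empty.

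Finally I would assemble the infinite word. Splitting off the first $J$ blocks gives $\mathbf{u} = (v_0 v_1 \cdots v_{J-1})\, v_J v_{J+1} v_{J+2} \cdots$, and the $P$-periodicity of the block sequence from index $J$ onwards turns the tail into $\pi^\omega$, so that $\mathbf{u} = (v_0 \cdots v_{J-1})\,\pi^\omega$ is eventually periodic. The argument is essentially a single pigeonhole step, so I do not expect a serious obstacle; the only points needing care are checking that $\varphi^k$ maps $T$ into itself (so that the orbit argument applies) and invoking immortality to ensure the period word $\pi$ is non-empty, which is precisely what makes $\mathbf{u}$ a genuine infinite eventually-periodic word rather than a finite one.
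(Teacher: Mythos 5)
Your proof is correct and follows essentially the same route as the paper: both rest on the observation that, since $u \in \A_0^+$, the iterated images of $u$ under $\varphi$ form a finite set, so the block sequence is eventually periodic by pigeonhole, and concatenating the blocks yields an eventually periodic word. The only cosmetic difference is that you apply the orbit argument directly to the stride-$k$ subsequence $(\varphi^{jk}(u))_j$ under the self-map $\varphi^k$, while the paper finds $s,t$ with $\varphi^{s}(u)=\varphi^{s+t}(u)$ and then adjusts indices $\ell_0,\ell_1$ to align with the stride $k$; your explicit use of immortality to guarantee a non-empty period word is a small point the paper leaves implicit.
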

\begin{proof}
	Clearly, the sequence $\left ( \varphi^j(u) \right)_{j=0}^{+\infty} $ is eventually periodic. Let $s$ and $t$ be numbers such that $\varphi^{s}(u) = \varphi^{s+t}(u)$ with $t > 0$. Define $\ell_0$ and $\ell_1$ so that
	$\ell_0k \geq s$ and $(\ell_1 - \ell_0)k$ is a multiple of $t$. We get that
	$$
		\mathbf{u} = u\varphi^k(u)\cdots\varphi^{\ell_0 k}(u)\left(\varphi^{(\ell_0 + 1)k}(u)\varphi^{(\ell_0 + 2)k}(u)\cdots \varphi^{\ell_1 k}(u)\right)^\omega.
	$$
	\qed %SS: mel by byt o radek vyse, ale to se blbe dela, navic nechapu, proc to tady musime mit vubec rucne
\end{proof}
The situation is analogous for the graph $\UL{G}$.
Since the cycles of the graphs $\UR{G}$ and $\UL{G}$ are clearly the only sources of factors over $\A_0$ of arbitrary length, we get the following theorem.
This result has been already obtained in \cite{CANT_2010_CaNi}, Proposition 4.7.62, where one can find a distinct proof.
\begin{theorem}\label{thm:all_pushy_factors}
	If $G$ is a pushy D0L-system, then there exist $L \in \N$ and a finite set $\mathcal{U}$ of words from $\A_0^+$ such that any factor from $\S(L(G)) \cap \A_0^+$ is of one of the following three forms:
	\begin{enumerate}[(i)]
		\item $w_1$,
		\item $w_1u_1^{k_1}w_2$,
		\item $w_1u_1^{k_1}w_2u_2^{k_2}w_3$,
	\end{enumerate}
	where $u_1, u_2 \in \mathcal{U}$, $|w_j| < L$ for all $j \in \{1,2,3\}$, and $k_1, k_2 \in \N^+$.
\end{theorem}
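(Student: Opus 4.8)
The plan is to reduce everything to the structure of the \emph{maximal} blocks of bounded letters occurring inside the words $\varphi^n(w)$, and then to feed those blocks into the eventual periodicity supplied by the preceding lemma. First I would note that every element of $\S(L(G)) \cap \A_0^+$ is a factor of some $\varphi^n(w)$, and hence is contained in a maximal factor over $\A_0$ of $\varphi^n(w)$ that is delimited either by two consecutive unbounded letters or by an end of the word. Since a factor of a word of the shape $w_1 u_1^{k_1} w_2 u_2^{k_2} w_3$ is again of one of the three stated forms once truncated powers $u_i$ are absorbed into the neighbouring $w_j$ (enlarging $L$ if necessary), it suffices to establish the claim for these maximal blocks.

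Next I would describe how a maximal block grows. Writing $\varphi^n(w)=\cdots\gamma\,B\,\beta\cdots$, where $\gamma,\beta$ are consecutive unbounded letters and $B\in\A_0^*$ lies between them, and tracking the images of the unbounded letters, $B$ decomposes as $B=\tau\,c\,\sigma$, where $\tau$ is the maximal bounded suffix contributed by (the ancestor of) $\gamma$, $\sigma$ is the maximal bounded prefix contributed by (the ancestor of) $\beta$, and $c$ is the image of a fixed-length block of bounded letters. Because bounded letters have bounded orbits, $|c|$ is bounded by a constant independent of $n$. The growth of $\tau$ is governed by a walk in $\UR{G}$ and that of $\sigma$ by a walk in $\UL{G}$.

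The heart of the argument is to show that $\tau$ and $\sigma$ are each of the form $w' u^{k'} w''$ with $|w'|,|w''|$ bounded and $u$ drawn from a finite set. For $\tau$, the relevant walk in $\UR{G}$ enters one of finitely many cycles after a transient of bounded length; writing the suffix as $u_n\varphi(u_{n-1})\cdots\varphi^{n-1}(u_1)$ and peeling off the transient part, which equals the $\varphi$-image of a fixed bounded word and hence stays bounded, the remaining on-cycle part is a prefix of one of the finitely many eventually periodic words $\mathbf{u}$ treated in the preceding lemma. A prefix of such a word is a bounded preperiod, a power of the period block, and a bounded remainder, which gives $\tau$ the required shape; the word $\sigma$ is handled by the left–right symmetric argument using $\UL{G}$.

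Finally I would assemble the cases. A two-sided block $B=\tau\,c\,\sigma$ becomes $(w_1 u_1^{k_1} w_2)\,c\,(w_3 u_2^{k_2} w_4)$, and absorbing $w_2\,c\,w_3$ into a single bounded word yields form (iii); a block at an end of $\varphi^n(w)$ has only one growing side and yields form (ii); a block whose walks never reach a cycle with immortal label stays bounded and yields form (i). Choosing $L$ larger than all of the finitely many transient lengths, preperiods, and images $|c|$, and letting $\mathcal U$ consist of all period blocks arising from the cycles of $\UR{G}$ and $\UL{G}$, completes the proof. I expect the main obstacle to be the \emph{uniformity} in the third step: one must bound the transient of each walk, the preperiod of each $\mathbf{u}$, and the middle factor $c$ simultaneously by a single constant over all $n$ and all unbounded letters, and must account for the residue class of $n$ modulo the cycle length when identifying which eventually periodic word the suffix is a prefix of; this is exactly where finiteness of the alphabet and of the two graphs is used.
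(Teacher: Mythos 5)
Your proposal is correct and takes essentially the same route as the paper: the paper obtains this theorem directly from the cycle structure of $\UL{G}$ and $\UR{G}$ together with the lemma that $u\varphi^{k}(u)\varphi^{2k}(u)\cdots$ is eventually periodic, which is exactly the machinery you invoke. In fact, the paper only sketches this step (asserting that the cycles of these graphs are ``clearly the only sources'' of arbitrarily long factors over $\A_0$, and citing an external reference for a distinct detailed proof), so your decomposition of maximal bounded blocks into $\tau\,c\,\sigma$ with bounded transients and bounded middle part is precisely the fleshed-out version of the argument the paper leaves implicit.
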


\begin{example}
	Consider the D0L-system $G = (\A, \varphi, 0)$ with $\A = \{0,1,2\}$ and $\varphi$ determined by $0 \mapsto 012, 1\mapsto 2$ and $2 \mapsto 1$. There are two bounded letters, $\A_0 = \{1,2\}$, and one unbounded letter. The graphs $\UL{G}$ and $\UR{G}$ both contain one loop on the only vertex $0$ labeled with the empty word and $12$, respectively. Therefore, $G$ is pushy and the corresponding infinite periodic factor over bounded letters is a suffix of
	$$
		12\varphi(12)\varphi^2(12)\varphi^3(12)\ldots\,,
	$$
	namely $(1221)^\omega$. In fact, this infinite periodic factor is a suffix of the eventually periodic fixed point of $\varphi$ starting in $0$.

	The D0L-system $H = (\B, \psi, 0)$ with $\B = \{0,1,2,3\}$ and $\psi$ determined by $0 \mapsto 0123, 1\mapsto 2, 2 \mapsto 1$ and $3 \mapsto 123$ is also pushy. The graph $\UL{H}$ contains two loops: one on the vertex $0$ labeled with the empty word and one on the vertex $3$ labeled with $12$; the labels in $\UR{H}$ are all equal to the empty word. Hence, there is an infinite periodic factor over bounded letters which is an infinite prefix of the left-infinite word
	$$
		\ldots\varphi^3(12)\varphi^2(12)\varphi(12)12.
	$$
	It this case, it is the word itself, namely $(2112)^\omega$. In this case, the fixed point of $\psi$ is not eventually periodic, but still all prefixes of $(2112)^\omega$ are its factors.
\end{example}

\subsection{Unbounded letters}

Now we address the other case: infinite periodic factors containing an unbounded letter.
\begin{theorem}\label{thm:periodic_periodic_points}
	Let $G = (\A, \varphi, w)$ be an injective repetitive D0L-system. If $[v]^\omega$ is a vertex of $\GP{G}$ such that $v$ contains an unbounded letter, then there exist $b \in \A$ and $\ell \in \N, 1 \leq \ell \leq \#\A$, such that
	$(\varphi^\ell)^\omega(b) = z^\omega$ for some $z \in [v]$.
\end{theorem}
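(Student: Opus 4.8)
The plan is to use the $1$-regularity of $\GP{G}$ (the Corollary) to realise $[v]^\omega$ as an honest periodic point of a power of $\varphi$, and then to compress that power down to size at most $\#\A$ by means of the first-letter map. First I set up notation: let $p$ be the primitive root of $v$, so that $v^\omega=p^\omega$, and observe that the vertex $[v]^\omega$, viewed as a set of infinite periodic factors, is exactly the finite set $\Omega=\{\sigma^i(p^\omega)\mid 0\le i<|p|\}$ of cyclic shifts of $p^\omega$ (these are pairwise distinct since $p$ is primitive, and each is an infinite periodic factor because conjugates of infinite periodic factors are again infinite periodic factors). Since $[v]^\omega$ lies on a cycle of some length $m$ in $\GP{G}$, the map $\Psi:=\varphi^m$ sends $\Omega$ into itself; being a self-map of a finite set, $\Psi$ possesses at least one cycle.

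The content of the proof is to find a $\Psi$-cycle all of whose shifts begin with an \emph{unbounded} letter, and this is the step I expect to be the main obstacle. It is needed because the target conclusion requires $\varphi^\ell(b)=bu$ with $u$ non-empty, which forces $b$ to be unbounded; a cycle whose representatives start with bounded letters is useless. The difficulty is that applying $\varphi$ to a conjugate starting with an unbounded letter can manufacture a bounded prefix, so an arbitrary $\Psi$-cycle need not start unbounded. To control this I would track, for each $x\in\Omega$, the length of its maximal prefix lying in $\A_0^+$ (this is well defined and bounded, using that $\A_0$ is $\varphi$-invariant, i.e.\ $\varphi(\A_0)\subseteq\A_0^*$). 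Writing each $x$ as a bounded prefix followed by its first unbounded letter, and using that $\varphi$ is non-erasing (injectivity implies non-erasing), one gets that this prefix-length does not decrease along a step of $\Psi$, hence is constant on any $\Psi$-cycle. Constancy then forces two things on a cycle: the image under $\varphi^m$ of the first unbounded letter of each cyclic shift must itself begin with an unbounded letter, and the bounded prefixes all have the same length. Deleting these constant-length bounded prefixes produces a parallel $\Psi$-cycle whose every shift begins with an unbounded letter (and if the original cycle already contained an unbounded-starting shift, I simply use it).

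From such a cycle, of length $c$ say, I obtain a shift $x_*=z^\omega\in\Omega$ with $z\in[v]$, whose first letter $b=\mathrm{first}(z)$ is unbounded and which satisfies $\varphi^{mc}(x_*)=x_*$; since $x_*$ begins with $b$, this gives $(\varphi^{mc})^\omega(b)=z^\omega$. It remains to replace the exponent $mc$, which may be large, by some $\ell$ with $1\le\ell\le\#\A$. For this I consider the first-letter map $g:\A\to\A$, $g(a)=\mathrm{first}(\varphi(a))$ (total, as $\varphi$ is non-erasing), and note $g^{mc}(b)=\mathrm{first}(\varphi^{mc}(b))=b$, so $b$ lies on a cycle of $g$; the length $\ell$ of that cycle is at most $\#\A$ and divides $mc$. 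Because $\varphi^{mc}$ is then a power of $\varphi^\ell$, the two morphisms share the same fixed point starting at $b$, whence $(\varphi^\ell)^\omega(b)=z^\omega$ with $z\in[v]$ and $1\le\ell\le\#\A$, as required. The only genuinely delicate point is the bounded-prefix bookkeeping of the second paragraph; the cycle extraction and the first-letter reduction are routine once it is in place.
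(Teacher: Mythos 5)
Your proof is correct, and its skeleton is the same as the paper's: produce a conjugate $z$ of $v$ and a power of $\varphi$ fixing $z^\omega$, force the first letter of $z$ to be unbounded, then cut the exponent down to at most $\#\A$ via the first-letter map (your $g$-cycle argument is exactly the paper's $\ell_{\min}$ step). The first stage is also the paper's argument in different clothing: the paper writes $\varphi^{tm}(v)=s_tv^{k_t}p_t$ with $p_ts_t=v$ and applies the pigeonhole principle to the sequence of suffixes $(s_t)$, which is precisely your observation that $\Psi=\varphi^m$ is a self-map of the finite set $\Omega$ of shifts of $p^\omega$ and hence has a periodic point --- the suffix $s_t$ merely records which element of $\Omega$ the word $\varphi^{tm}(v^\omega)$ is. The genuine divergence is the step you rightly single out as the crux: a bounded first letter. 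The paper fixes one conjugate $z$ with $\varphi^{\ell}(z)=z^k$ and, when $b=\mathrm{first}(z)$ is bounded, conjugates by a stabilized image $u=\varphi^f(b)$ (chosen with $\varphi^f(b)=\varphi^{2f}(b)$), passing from $z=uz'$ to $z'u$, and repeats until the first letter is unbounded; this iteration implicitly enlarges the exponent (the identity $\varphi^\ell(z'u)=(z'u)^k$ really holds with a suitable multiple of $\ell$ in place of $\ell$) and needs a termination argument, both of which the paper leaves tacit. Your monovariant --- the length of the maximal $\A_0$-prefix cannot decrease under $\Psi$, since $\varphi(\A_0)\subseteq\A_0^*$ and injectivity forces $\varphi$ to be non-erasing, hence this length is constant around a $\Psi$-cycle --- strips all bounded prefixes in one shot, keeps the exponent $mc$ untouched, and needs no iteration; the price is setting up the finite dynamical system $\Omega$ explicitly. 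One cosmetic point: working with the primitive root $p$ you obtain $z\in[p]$ rather than $z\in[v]$, but as in the paper's reduction to primitive $v$ this is harmless, since $z^\omega=\bigl(z^{j}\bigr)^\omega$ with $j=|v|/|p|$ and $z^{j}$ a conjugate of $v$.
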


\begin{proof}
	Clearly, we can assume that $v$ is primitive. If $[v]^\omega$ is a vertex of a cycle of length $m$ in the graph $\GP{G}$, then $\varphi^m(v^\omega) \in [v]^\omega$. Since $v$ contains an unbounded letter, we get $|\varphi^m(v)| > |v|$ and so there exist a nonnegative integer $k_1$, a suffix $s_1$ of $v$ and a prefix $p_1$ of $v$ such that $\varphi^m(v) = s_1v^{k_1}p_1$. In fact, $\varphi^{tm}(v)$ is a factor of $v^\omega$ for all $t$, thus there exist a sequence of integers $(k_t)_{t \geq 1}$, sequence of suffixes of $v$ $(s_t)_{t \geq 1}$ and prefixes of $v$ $(p_t)_{t \geq 1}$ such that $\varphi^{tm}(v) = s_tv^{k_t}p_t$. Clearly it must hold that $p_ts_t = v$ for all $t$ (since $\varphi^{tm}(vv)$ is also a factor of $v^\omega$).

	Obviously, the sequence $(s_t)_{t \geq 1}$ is eventually periodic (and so is $(p_t)_{t \geq 1}$). Therefore, there exist integers $t_1 < t_2$ such that $s_{t_1} = s_{t_2}$. It follows that $\varphi^{m(t_2 - t_1)}(s_{t_1}p_{t_1}) = (s_{t_1}p_{t_1})^k$ for some $k > 1$.

	Put $z = s_{t_1}p_{t_1}$, $\ell = m(t_2 - t_1)$ and $b = \mathrm{first}(z)$. Clearly, $\varphi^{k\ell}(b)$ is a prefix of $z^\omega$ for all $k$. If $b$ is unbounded, we are done. Assume $b$ is bounded, then there exists $f$ such that $u = \varphi^f(b) = \varphi^{2f}(b)$. Hence, $u$ is a prefix of $z = uz'$ with $|z'| > 0$ and we get $\varphi^{\ell}(z'u) = (z'u)^k$ for some $k > 1$. Since $z$ must contain at least one unbounded letter and $u$ consists of bounded letters only, we can repeat this until the first letter of $z'$ is unbounded.

	Assume now that $(\varphi^{\ell})^\omega(b) = z^\omega$. It remains to prove that $\ell$ can be taken less than or equal to $\#\A$.
	Put $\ell_{\min} = \min\{j \in \N \colon \mathrm{first}(\varphi^j(b)) = b \}$.
	Obviously such $\ell_{\min}$ exists and is at most $\#\A$. Moreover, $\ell$ must be a multiple of $\ell_{\min}$ and hence we can put $\ell = \ell_{\min}$ and still have $(\varphi^{\ell})^\omega(b) = z^\omega$. \qed
\end{proof}

By Theorems~\ref{thm:all_pushy_factors} and \ref{thm:periodic_periodic_points} and by Lemma~\ref{lem:simplifications_preserves_periodiv_factors} we get the following claim.
\begin{corollary}
	Any repetitive D0L-system contains a finite number of primitive words $v$ such that $v^\omega$ is an infinite periodic factor.
\end{corollary}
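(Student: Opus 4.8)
The plan is to reduce the statement to a finiteness statement about the vertex set of the graph $\GP{G}$ and then to dispose of the two kinds of cycles separately.

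First I would observe that it suffices to prove that $\GP{G}$ has finitely many vertices. Indeed, a single equivalence class $[v]^\omega$ contains only finitely many \emph{primitive} words, namely the at most $|v|$ conjugates of the primitive root of $v$; hence finitely many vertices $[v]^\omega$ already force finitely many primitive $v$ with $v^\omega$ an infinite periodic factor. Next, using Lemma~\ref{lem:simplifications_preserves_periodiv_factors}, I pass to an injective simplification $G'$ of $G$ with respect to $(h,k)$: the assignment $[v]^\omega \mapsto [h(v)]^\omega$ is well defined on classes (conjugate primitive words have conjugate images under $h$) and induces a bijection between the equivalence classes of infinite periodic factors of $G$ and those of $G'$, so it is enough to bound the number of vertices of $\GP{G'}$ for injective $G'$. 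From now on I assume $G$ injective, so that by the corollary to Lemma~\ref{lem:periodic_has_periodic_ancestor} the graph $\GP{G}$ is a disjoint union of cycles, and by the subsequent lemma each cycle is either entirely over $\A_0$ or contains a vertex with an unbounded letter.

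For a cycle containing an unbounded letter I would invoke Theorem~\ref{thm:periodic_periodic_points}. It attaches to each such vertex $[v]^\omega$ a pair $(b,\ell)$ with $b \in \A$ and $1 \le \ell \le \#\A$ for which $(\varphi^\ell)^\omega(b) = z^\omega$ with $z \in [v]$. There are at most $(\#\A)^2$ such pairs, and each pair determines the single infinite word $(\varphi^\ell)^\omega(b)$, hence its primitive root, and hence the class $[v]^\omega$ uniquely. Thus there are only finitely many vertices of this type. For a cycle over $\A_0$ I would use Theorem~\ref{thm:all_pushy_factors}: if $v \in \A_0^+$ is primitive with $v^\omega$ an infinite periodic factor, then every power $v^k$ lies in $\S(L(G)) \cap \A_0^+$ and so matches one of the three prescribed forms; since $|v^k| \to \infty$ while $L$ and $\mathcal U$ stay fixed, for large $k$ the power $v^k$ must contain a block $u_i^{k_i}$ with $u_i \in \mathcal U$ whose length exceeds $|v| + |u_i|$. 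Such a block is a long common factor of the periodic words $v^\omega$ and $u_i^\omega$, so a Fine--Wilf/synchronization argument forces the primitive root of $v$ to be a conjugate of the primitive root of $u_i$. As $\mathcal U$ is finite, this produces only finitely many vertices over $\A_0$, and together with the previous case the number of vertices of $\GP{G}$ is finite.

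The step I expect to be the main obstacle is the bounded case: one has to turn the qualitative ``one of three forms'' of Theorem~\ref{thm:all_pushy_factors} into a genuinely long block $u_i^{k_i}$ and then apply the conjugacy-from-a-long-common-factor lemma with the correct length threshold. The reduction to the injective case rests entirely on Lemma~\ref{lem:simplifications_preserves_periodiv_factors}, and the unbounded case is immediate from Theorem~\ref{thm:periodic_periodic_points} once one notes that a pair $(b,\ell)$ pins down the periodic point and hence the class.
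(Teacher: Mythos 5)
Your proposal follows exactly the route the paper intends: the paper's entire proof is the sentence ``By Theorems~\ref{thm:all_pushy_factors} and \ref{thm:periodic_periodic_points} and by Lemma~\ref{lem:simplifications_preserves_periodiv_factors} we get the following claim,'' and you have filled in precisely those three ingredients with the right case split. Your two main cases are handled correctly: for classes containing an unbounded letter, Theorem~\ref{thm:periodic_periodic_points} pins each class to one of at most $(\#\A)^2$ pairs $(b,\ell)$, and for classes over $\A_0$ the combination of Theorem~\ref{thm:all_pushy_factors} with a Fine--Wilf argument (a common factor of $v^\omega$ and $u_i^\omega$ of length at least $|v|+|u_i|$ forces conjugate primitive roots) bounds them by $\#\mathcal{U}$; note in passing that this bounded-letter argument needs no injectivity at all, since Theorem~\ref{thm:all_pushy_factors} holds for arbitrary pushy systems.

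One caution about the reduction step, which is the only place you claim more than you prove: Lemma~\ref{lem:simplifications_preserves_periodiv_factors} gives a \emph{well-defined} map $[v]^\omega \mapsto [h(v)]^\omega$ on classes (your parenthetical argument shows exactly this), but it does not by itself give a \emph{bijection}, and your finiteness transfer needs at least injectivity (or finite fibers). This is not automatic: $h$ is in general a non-injective morphism (it identifies precisely what $\varphi$ identifies), so two non-conjugate primitive infinite periodic factors of $G$ could a priori have images with conjugate primitive roots. The natural repair -- ``if $h_*[v]=h_*[u]$ then $f_*^j[v]=f_*^j[u]$ for all $j$, and iterating around a cycle gives $[v]=[u]$'' -- is only available once one knows every class of $\GP{G}$ lies on a cycle, which is the content of Lemma~\ref{lem:periodic_has_periodic_ancestor} and its corollary, proved \emph{only for injective} $G$; for the original, possibly non-injective system this is exactly what is in question. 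To be fair, the paper is equally silent on this point (its one-line proof, and the correctness of the final step $\mathrm{OutputList} \leftarrow \{\pw(k(w))\}$ in Algorithm~\ref{algo}, rest on the same implicit transfer), so your write-up is at the level of rigor of the paper's own proof, with substantially more detail everywhere else; but if you wanted a self-contained argument, this injectivity/finite-fiber claim is the one step that still requires proof.
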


\section{Algorithm}

\begin{algorithm}[h!]

\SetKwInOut{Input}{Input}\SetKwInOut{Output}{Output}
\SetKwData{ol}{OutputList}
\SetKwData{wu}{u}
\SetKwData{cy}{c}
\SetKwData{es}{s}
\SetKwData{te}{t}
\SetKwData{lo}{l0}
\SetKwData{lp}{l1}
\SetKwData{la}{a}
\SetKwData{el}{lh}
\SetKwData{elll}{l}
\SetKwData{ve}{v}
\SetKwData{lh}{h}
\SetKwData{lk}{k}
\SetKwFunction{lcm}{LeastCommonMultiple}
\SetKwFunction{pw}{PrimitiveRoot}
\SetKwFunction{fl}{FirstLetter}
\SetKwFunction{break}{break}
\SetKwFunction{return}{return}

\Input{Alphabet $\A$, morphism $\varphi: \A^* \to \A^*$.}
\Output{List of all primitive words $v$ such that $v^k$ can be generated by $\varphi$ for all $k$.}
\BlankLine

$\varphi \leftarrow$ injective simplification of $\varphi$ with respect to (\lh,\lk)\;
calculate the list of bounded letters $\A_0$\;
construct the graphs $\UL{G}$ and $\UR{G}$\;
\ol $\leftarrow$ empty list\;
\ForEach{cycle \cy in $\UL{G}$}{
	\If{\cy contains an edge with non-empty immortal label}
	{
		\wu $\leftarrow  u_{k} \varphi(u_{k-1}) \cdots \varphi^{k-1}(u_1)$ where $u_1,\ldots,u_k$ are the labels of edges in the cycle \cy\;
		find the least $\es$ and $\te$ such that $\te > 0$ and $\varphi^{\es}(\wu) = \varphi^{\es+\te}(\wu)$\;
		$\lo \leftarrow \lceil \nicefrac{\es}{k} \rceil \cdot k$\;
		$\lp \leftarrow \lo + \lcm(\te, k) / k$\;
		append \pw($\varphi^{(\lo + 1)k}(\wu)\varphi^{(\lo + 2)k}(\wu)\cdots \varphi^{\lp k}(\wu)$) to \ol\;
	}
}
\ForEach{cycle \cy in $\UR{G}$}{
	//analogous procedure as for cycles in $\UL{G}$
}
\ForEach{letter \la in $\A \setminus \A_0$}{
	find the least \elll such that $\fl(\varphi^{\elll}(\la)) = \la$ with $\elll \leq \# \A$ \;%, if it does not exists, set $\elll \leftarrow 0$ \;
	%\If{$\elll > 0$ and $\varphi^{\elll}(\la) \not \in \la \A_0^*$}{
	\If{$\elll$ exists}{
		find the least $\es$ such that $\varphi^{\elll \cdot \es}(\la)$ contains at least two occurrences of one unbounded letter \;
		\If{$\varphi^{\elll \cdot \es}(\la)$ contains at least two occurrences of $\la$}
		%\If{$\es$ exists}
		{
			$\ve \leftarrow$ the longest prefix of $\varphi^{\elll \cdot \es}(\la)$ containing only one occurrence of $\la$\;
			\If{$\varphi^{\elll}(\ve) = \ve^{m}$ for some integer $m \geq 2$}
			{
				append \ve to \ol\;
			}
		}
	}
}
\ol $\leftarrow \{ \pw(\lk(w)) \colon w \in \ol\}$\;
add conjugates to \ol\;
\return \ol\;
\BlankLine
\caption{Pseudocode for the main algorithm.}\label{algo}
\end{algorithm}

Given an injective D0L-system $G = (\A, \varphi, w)$, all infinite periodic factors over bounded letters can be obtained from the cycles in graphs $\UL{G}$ and $\UR{G}$.
It remains unclear how to find infinite periodic factors containing an unbounded letter.
Theorem~\ref{thm:periodic_periodic_points} says that equivalence classes $[v]^\omega$ that contains those factors are in one-to-one correspondence with periodic periodic\footnote{Note that ``periodic periodic'' is not a typing error: it is a periodic point of a morphism, which happens to be periodic, i.e., of the form $w^\omega$.} points of the morphism $\varphi$.
Consider the following \emph{graph of first letters} over the set of vertices  equal to $\A$: there is a directed edge from $a$ to $b$ if $b = \mathrm{first}(\varphi(a))$. It follows that $(\varphi^\ell)^\omega(a)$ is an infinite periodic point of $\varphi$ if and only if $a \in \A\setminus
\A_0$ is a vertex of a cycle of a length that divides $\ell$. Therefore, we have only finitely many candidates $a \in \A$ and $\ell \in \N^+$ for which we need to verify whether $(\varphi^\ell)^\omega(a)$ is a periodic infinite word. In fact, it suffices to check only one letter from each cycle of the graph of first letters. Finally, to verify whether the word $(\varphi^\ell)^\omega(a)$ is periodic we can use the algorithm described in~\cite{La91}. This algorithm is very effective, in short it works as follows:
\begin{enumerate}
	\item Find the least $k \leq \#\A$ such that $(\varphi^\ell)^k(a)$ contains at least two occurrences of one unbounded letter.
	\item If $(\varphi^\ell)^k(a)$ does not contain two occurrences of $a$, then $(\varphi^\ell)^\omega(a)$ is not periodic. Otherwise denote by $v$ the longest prefix of $(\varphi^\ell)^k(a)$ containing $a$ only as the first letter.
	\item Now, $(\varphi^\ell)^\omega(a)$ is periodic if and only if $\varphi^\ell(v) = v^m$ for some integer $m \geq 2$.
\end{enumerate}
This, together with the algorithm to construct an injective simplification, gives us an effective algorithm that decides whether a given D0L-system is repetitive and, moreover, returns the list of all infinite periodic factors.
%It is not hard to see that the constructed algorithm is polynomial in the terms of the cardinality of the alphabet and the maximum size of letter images by the morphism in question.

The described algorithm is given in pseudocode in Algorithm \ref{algo}.

\section*{Acknowledgement}

We acknowledge financial support by the Czech Science Foundation grant GA\v CR 13-35273P.
The algorithm was implemented and tested in the open-source mathematical software \texttt{Sage} \cite{sage5}.

\bibliographystyle{siam}
\bibliography{biblio}

\end{document}